\DeclarePairedDelimiter{\abs}{\lvert}{\rvert}
\DeclarePairedDelimiter{\card}{\lvert}{\rvert}
\DeclarePairedDelimiter{\floor}{\lfloor}{\rfloor}
\DeclarePairedDelimiter{\fractional}{\{}{\}}
\DeclarePairedDelimiterXPP{\oh}[1]{o}(){}{#1}
\DeclarePairedDelimiterXPP{\Oh}[1]{\mathcal{O}}(){}{#1}
\DeclarePairedDelimiter{\set}{\{}{\}}
\DeclarePairedDelimiterX{\setm}[2]{\{}{\}}{#1\,\delimsize\vert\,\mathopen{}#2}
\newtheorem{thm}{Theorem}
\newtheorem{lem}[thm]{Lemma}
\newtheorem{cor}[thm]{Corollary}
\theoremstyle{remark}
\def\namedlabel#1#2{\begingroup
    #2%
    \def\@currentlabel{#2}%
    \phantomsection\label{#1}\endgroup
}
\newcommand{\TODO}[1]%
{\par\fbox{\begin{minipage}{0.9\linewidth}\textbf{TODO:}
      #1\end{minipage}}\par}
\begin{document}

\thispagestyle{plain}

\title{On the minimal Hamming weight of a multi-base representation}

\author{Daniel Krenn}
\address{Daniel Krenn \\
Department of Mathematics \\
Alpen-Adria-Universit\"at Klagenfurt
Universit\"atsstra\ss e 65--67 \\
9020 Klagenfurt \\
Austria
}
\email{\href{mailto:math@danielkrenn.at}{math@danielkrenn.at} \textit{or}
  \href{mailto:daniel.krenn@aau.at}{daniel.krenn@aau.at}}

\thanks{Daniel Krenn is supported 
    by the Austrian Science Fund (FWF): P\,28466-N35.}

\author{Vorapong Suppakitpaisarn}
\address{Vorapong Suppakitpaisarn \\
  Graduate School of Information Science and Technology \\
  The University of Tokyo \\
  7-3-1 Hongo, Bunkyo-ku, Tokyo 113-0033 \\
  Japan
}
\email{\href{mailto:vorapong@is.s.u-tokyo.ac.jp}{vorapong@is.s.u-tokyo.ac.jp}}

\author{Stephan Wagner}
\address{Stephan Wagner \\
Department of Mathematical Sciences \\
Stellenbosch University \\
Private Bag X1 \\
Matieland 7602 \\
South Africa
}
\email{\href{mailto:swagner@sun.ac.za}{swagner@sun.ac.za}}

\thanks{Stephan Wagner is supported by the National Research Foundation of South Africa, grant 96236.}

\subjclass[2010]{%
  11A63; % radix representation; digital problems
  11J25, % Diophantine inequalities
  68R05, % discrete mathematics in relation to computer science: combinatorics
  94A15% % Information theory, general
}
\keywords{multi-base representations, Hamming weight, minimal weight}

\date{\today}

\begin{abstract}
  Given a finite set of bases $b_1$, $b_2$, \dots, $b_r$ (integers
  greater than $1$),
  a multi-base representation of an integer~$n$ is
  a sum with summands $db_1^{\alpha_1}b_2^{\alpha_2} \cdots b_r^{\alpha_r}$,
  where the $\alpha_j$ are nonnegative integers and
  the digits $d$ are taken from a fixed finite set. We consider multi-base representations
  with at least two bases that are multiplicatively independent. Our main result states
  that the order of magnitude of the minimal Hamming weight of an integer~$n$, i.e., the
  minimal number of nonzero summands in a representation of~$n$,
  is $\log n / (\log \log n)$. This is independent of the number of bases, the
  bases themselves, and the digit set.

  For the proof, the existing upper bound for prime bases is
  generalized to multiplicatively independent bases; for the required
  analysis of the natural greedy algorithm, an auxiliary result in
  Diophantine approximation is derived. The lower bound follows by a
  counting argument and alternatively by using communication
  complexity; thereby improving the existing bounds and closing the
  gap in the order of magnitude. This implies also that the
  greedy algorithm terminates after $\mathcal{O}(\log n/\log \log n)$
  steps, and that this bound is sharp.
\end{abstract}

\maketitle

\section{Introduction}
\label{sec:intro}

\subsection{Multi-base representations}

Let a finite set $\set{b_1,b_2,\ldots,b_r}$ of
\emph{bases} (integers greater than $1$) be given, along with a finite
set $D$ of nonnegative integers that includes $0$. The elements of $D$
will be called \emph{digits}. We let
\begin{equation*}
  \mathcal{B} = \setm[\big]{b_1^{\alpha_1} b_2^{\alpha_2} \cdots b_r^{\alpha_r}}{
    \text{$\alpha_1$, $\alpha_2$, \dots, $\alpha_r$ nonnegative integers}}
\end{equation*}
be the free monoid generated by $b_1$, $b_2$, \dots, $b_r$;
the elements of $\mathcal{B}$ are called \emph{power-products}.
A \emph{multi-base representation} of a positive integer~$n$ is a
representation of the form
\begin{equation}\label{eq:multi-base}
  n = \sum_{B \in \mathcal{B}} d_B B,
  \tag{$\divideontimes$}
\end{equation}
where $d_B \in D$ for all $B \in \mathcal{B}$.

For simplicity, we make the natural
assumption that every positive integer has at least one such
representation, which implies in particular that $1 \in D$. 
We will also assume that the bases $b_1$, $b_2$, \dots, $b_r$ are
multiplicatively independent, i.e., the only integers
$\alpha_1$, $\alpha_2$, \dots, $\alpha_r$ for which
\begin{equation*}
  b_1^{\alpha_1}b_2^{\alpha_2} \cdots b_r^{\alpha_r} = 1
\end{equation*}
are $\alpha_1 = \alpha_2 = \cdots = \alpha_r = 0$. Intuitively, this means
that there is no ``redundancy'' in the set of bases.

Note that we obtain the
standard base-$b$ representation for $r = 1$, base~$b_1 = b$ and
digit set~$D = \set{0,1,\ldots,b-1}$.

\subsection{Notes on the set-up}
\label{sec:setup}

The set-up for multi-base representations that we described is quite standard
(except possibly for the multiplicative independence). However, our
proofs still apply with the following modifications:
\begin{itemize}
\item All digits~$d_B$ in the multi-base
  representations~\eqref{eq:multi-base} of~$n$ are assumed to be in
  $\Oh{\log n}$ (in contrast to a finite, nonnegative digit set).
\item All exponents~$\alpha_j$ in the multi-base
  representations~\eqref{eq:multi-base} of~$n$ are assumed to be in
  $\Oh{\log n}$. This is essentially trivial if all digits are nonnegative (see
  Section~\ref{sec:lower-existing}), but we can also allow negative digits
if this additional assumption is imposed.
\item At least two of the bases are assumed to be multiplicatively
  independent (in contrast to the entire set being multiplicatively independent).
\end{itemize}

\subsection{Hamming weight}

Of course, only finitely many terms of the
sum~\eqref{eq:multi-base} can be nonzero.
The number of these terms is called the
\emph{Hamming weight} of a representation. The Hamming weight is a
measure of how efficient a certain representation is.
A multi-base representation of an integer~$n$ is called \emph{minimal}
if it minimizes the Hamming weight among all multi-base
representations of~$n$ with the same bases and digit set.

An overview on previous works concerning the Hamming weight of
multi-base representations will follow in
Sections~\ref{sec:greedy} to~\ref{sec:single-base}.
At this point, we only mention that
the Hamming weight of single-base representations
has been thoroughly studied (see Section~\ref{sec:single-base}),
not only in the case of the standard set $\set{0,1,\ldots,b-1}$
of digits, but also for more general types of digit sets. Both the
worst case (maximum) and the average order of magnitude of the Hamming
weight are $\log n$.

\subsection{Main result}
In this short note, we investigate the Hamming
weight of multi-base representations and find that the Hamming weight
can be reduced---even in the worst case---by using multi-base
representations. However, the reduction compared to single-base representations
is fairly small. Perhaps surprisingly, the order of magnitude is independent of the
number~$r$ of bases (provided only that $r \geq 2$), the set of bases and the
set of digits: it is always $\frac{\log n}{\log \log n}$.

The precise statement is as follows.

\begin{thm}\label{thm:main}
  Suppose that $r \geq 2$, and that the multiplicatively independent bases
  $b_1$, $b_2$, \dots, $b_r$ and the digit set~$D$ are
  such that every positive integer $n$ has a representation of the
  form~\eqref{eq:multi-base}. There exist two positive constants $K_1$
  and $K_2$ (depending on $b_1$, $b_2$, \dots, $b_r$ and $D$) such that
  the following hold:
  \begin{itemize}
  \item[\namedlabel{itm:upper}{(U)}]
    For all integers $n > 2$, there exists a representation of the
    form~\eqref{eq:multi-base} with Hamming weight at most
    $K_1 \frac{\log n}{\log \log n}$.
  \item[\namedlabel{itm:lower}{(L)}]
    For infinitely many positive integers $n$, there is no
    representation of the form~\eqref{eq:multi-base} whose Hamming
    weight is less than $K_2 \frac{\log n}{\log \log n}$.
  \end{itemize}
\end{thm}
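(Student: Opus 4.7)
The plan is to prove the two halves of Theorem~\ref{thm:main} by essentially independent arguments: for the upper bound \ref{itm:upper}, I would analyse the natural greedy algorithm together with an auxiliary Diophantine approximation lemma, and for the lower bound \ref{itm:lower}, a direct counting argument is sufficient.

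For \ref{itm:upper}, consider the greedy procedure: starting from $n_0 = n$, at each step pick $d_i \in D$ and $B_i \in \mathcal{B}$ maximising $d_i B_i \leq n_i$, and set $n_{i+1} = n_i - d_i B_i$; the Hamming weight of the output equals the number of steps until $n_i = 0$. The analysis hinges on a quantitative approximation statement, which I would isolate as a separate lemma: there exist constants $c, C > 0$ (depending only on the bases and on $D$) such that for every $x \geq 2$ one can find $d \in D$ and $B \in \mathcal{B}$ with $dB \leq x$ and $x - dB \leq C x (\log x)^{-c}$. Granting this, a single greedy step decreases $\log n_i$ by at least $c \log \log n_i - O(1)$, and an elementary recurrence (comparing the potential $\log n_i / \log \log n_i$ before and after one step) then yields a total number of steps of $O(\log n / \log \log n)$, as required.

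The approximation claim is where multiplicative independence really enters, and I expect it to be the main technical obstacle. Taking $r = 2$ and $d = 1$ for illustration, one wants the additive semigroup generated by $\log b_1$ and $\log b_2$ to contain a point within $(\log x)^{-c}$ of every target in $[0, \log x]$ from below, with exponents polynomially bounded in $\log x$. This reduces to a question about how the fractional parts of $\alpha \log b_2 / \log b_1$ fill out $[0,1)$, and the required $(\log x)^{-c}$ discrepancy rate should follow from the fact that $\log b_2 / \log b_1$ has a finite effective irrationality measure, a consequence of Baker-type lower bounds on linear forms in logarithms. Keeping the exponent bound, the full digit set, and an arbitrary number $r \geq 2$ of bases under control simultaneously while preserving a positive constant $c$ is where I expect the bookkeeping to be most delicate.

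For \ref{itm:lower}, I would argue by counting. Using the observation from Section~\ref{sec:setup} that one may assume all exponents in a representation of $n$ are $O(\log n)$, the number of available power-products in any representation of an integer $n \leq N$ is at most $O((\log N)^r)$. Consequently, the total number of multi-base representations of Hamming weight at most $w$ using such power-products is bounded by $(C_3 (\log N)^r)^w$ for some constant $C_3$. If every positive integer $n \leq N$ admitted a representation of Hamming weight less than $K_2 \log N/\log \log N$, then, setting $w = \lfloor K_2 \log N/\log \log N \rfloor$, we would obtain $N \leq (C_3 (\log N)^r)^w$; taking logarithms gives $\log N \leq w(r \log \log N + O(1))$, which is a contradiction for sufficiently small $K_2 > 0$. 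This furnishes infinitely many $n$ witnessing \ref{itm:lower}.
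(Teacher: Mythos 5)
Your proposal follows essentially the same route as the paper in both halves: the upper bound~\ref{itm:upper} via the greedy algorithm driven by a Diophantine approximation lemma, itself proved through the discrepancy of the fractional parts of $m\log_{b_1}b_2$ and the finite irrationality measure coming from Baker's theory of linear forms in logarithms, and the lower bound~\ref{itm:lower} via the same count of weight-at-most-$w$ representations built from the $\Oh{(\log N)^r}$ admissible power-products. The only detail to add when writing this up is the paper's observation that the remainder after a greedy step is smaller than the power-product just subtracted, which guarantees that successive steps use distinct elements of $\mathcal{B}$ and hence that the output really is a representation of the form~\eqref{eq:multi-base}.
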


The upper bound of this theorem needs weaker assumptions on the bases
than the result of Dimitrov, Jullien and
Miller~\cite{Dimitrov-Jullien-Miller:1998:algor-for-modul-expon}: They
require that all the bases~$b_1$, \dots, $b_r$ are primes,\footnote{
  The proof of the bound
  in~\cite{Dimitrov-Jullien-Miller:1998:algor-for-modul-expon} is
  carried out for double-base representations with bases~$2$ and $3$,
  and it is stated that it generalizes to sets of bases being finite
  sets of primes.}
whereas we
only need that (two of) the bases are multiplicatively independent.
The order of magnitude of both bounds coincides. We will prove the
bound~\ref{itm:upper} for our general multi-base set-up
in Section~\ref{sec:upper} by analyzing the Greedy algorithm.

The best known lower bound\footnote{ When we speak of a ``lower
  bound'', say $L(n)$, in this paper, we mean that there exist
  infinitely many positive integers~$n$ which do not have a
  representation with Hamming weight less than~$L(n)$.}
for the minimal Hamming weight seems to be
of order $\frac{\log n}{\log \log n \,\cdot\, \log \log \log n}$ (see
Dimitrov and Howe~\cite{Dimitrov-Howe:2011:length-double-base-repr})
for double-base representations with bases~$2$ and $3$. Yu, Wang, Li
and Tian~\cite{Yu-Wang-Li-Tian:2013:length-triple-base} extend this
result to triple-base representations with bases~$2$, $3$ and $5$.
Our lower bound~\ref{itm:lower} closes the gap to the upper
bound in the order by getting rid of the factor~$\log \log \log n$
in the denominator. We show this result in
Section~\ref{sec:lower} by a counting argument and in
Section~\ref{sec:communication-complexity} by using communication
complexity.

\subsection{Background on multi-base representations}

Motivation for studying multi-base representations comes from
fast and efficient arithmetical operations. One particular starting
point is~\cite{Dimitrov-Jullien-Miller:1998:algor-for-modul-expon},
where double-base and multi-base representations are used for modular
exponentiation.  Beside many other references,
\cite{Avanzi-Dimitrov-Doche-Sica:2006:double-base,
  Dimitrov-Imbert-Mishra:2008:double-base,
  Dimitrov-Jullien-Miller:1999:double-base} describe the usage of
double-base systems for cryptographic applications; the typical bases
used are~$2$ and~$3$.

Questions such as: does every integer have a multi-base representation,
or: what is the smallest number that cannot be represented in a certain
system, are also of great interest; cf.\@
\cite{Berthe-Imbert:2009,
  Bertok:2013:diff-power-products,
  Bertok-Hajdu-Luca-Sharma:2017:number-non-zero-multi-base,
  Krenn-Thuswaldner-Ziegler:2013:belcher}.
The number of multi-base representations has also been analyzed; see
\cite{Krenn-Ralaivaosaona-Wagner:2014:multi-base-asy,
  Krenn-Ralaivaosaona-Wagner:ta:multi-base-asy-full}.

\subsection{Greedy algorithm}
\label{sec:greedy}

Let us come back to multi-base representations in this work's set-up.
The natural greedy algorithm finds a multi-base representation of a
nonnegative integer~$n$ successively by
\begin{itemize}
\item adding the largest power-product~$B \in \mathcal{B}$ less than or equal to~$n$ 
to the representation, and
\item continuing in the same manner with $n-B$.
\end{itemize}
The greedy algorithm does not produce a minimal representation in
general. For instance, for double-base representations with bases~$2$
and $3$, the smallest counter-example is
\begin{equation*}
  41 = 2^23^2 + 2^2 + 1 = 2^5 + 3^2.
\end{equation*}

The upper bound for the minimal Hamming weight is derived by Dimitrov,
Jullien and
Miller~\cite{Dimitrov-Jullien-Miller:1998:algor-for-modul-expon} by
analyzing the greedy algorithm (as mentioned for prime bases). This is
also our approach in this paper. Our result translates to the following corollary,
which is a direct consequence of the proof and the statement of Theorem~\ref{thm:main}.

\begin{cor}\label{cor:greedy}
  Suppose that $r \geq 2$, and that the multiplicatively independent
  bases $b_1$, $b_2$, \dots, $b_r$ and the digit set~$D$ are such that
  every positive integer $n$ has a representation of the
  form~\eqref{eq:multi-base}. Then, the natural greedy algorithm with
  input~$n$ terminates after $\Oh[\big]{\frac{\log n}{\log \log n}}$ steps,
  and this bound is sharp. The output is a representation containing
  only digits~$0$ and $1$.
\end{cor}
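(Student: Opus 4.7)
The plan is to deduce the corollary directly from Theorem~\ref{thm:main}, using an additional structural observation about the greedy algorithm that is already implicit in the analysis in Section~\ref{sec:upper}. I would organize the proof around three ingredients: identification of the number of steps with the Hamming weight of the greedy output, the upper bound from \ref{itm:upper}, and the sharpness from \ref{itm:lower}.

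First I would verify that greedy never picks the same power-product twice, so that the output uses only the digits $0$ and $1$ and the number of steps equals its Hamming weight. Let $B$ be the largest power-product with $B \leq n$ at the current step, and let $B'$ be the next larger element of $\mathcal{B}$, so that $n < B'$. If $B' \leq 2B$, then $n - B < B$ and the next greedy pick is a strictly smaller power-product; iterating, no power-product is used twice. The key point is the ratio bound $B' \leq 2B$: this is a Diophantine statement about the multiplicatively independent bases $b_1,\dots,b_r$, essentially asserting sufficient density of the integer combinations of the logarithms $\log b_i$ in $\mathbb{R}$, and is exactly the flavour of the auxiliary result advertised in the abstract.

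Given this identification, the upper bound $\Oh{\log n/\log\log n}$ on the number of steps is immediate: Theorem~\ref{thm:main}\,\ref{itm:upper} is proved in Section~\ref{sec:upper} by analyzing the greedy algorithm, so the Hamming weight of the greedy output---and hence the number of steps---is $\Oh{\log n / \log \log n}$. Sharpness follows from Theorem~\ref{thm:main}\,\ref{itm:lower}: for infinitely many $n$, every multi-base representation of $n$, and in particular the one produced by greedy, has Hamming weight at least $K_2 \log n / \log \log n$, so the algorithm requires at least that many steps on these inputs.

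The main obstacle is the Diophantine ratio bound $B'\leq 2B$ underlying the digit statement; once it is available, the two numerical bounds reduce essentially immediately to restatements of Theorem~\ref{thm:main}.
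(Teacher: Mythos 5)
Your overall route---identify the number of greedy steps with the Hamming weight of the output, then quote \ref{itm:upper} for the upper bound and \ref{itm:lower} for sharpness---is the same as the paper's, which indeed treats the corollary as a direct consequence of the proof of Theorem~\ref{thm:main}. The sharpness direction is fine: the number of steps is always at least the Hamming weight of the output, so \ref{itm:lower} applies. But the ratio bound $B' \leq 2B$ on which you hang the ``digits $0$ and $1$'' claim is false in general, and so is that claim for the unmodified greedy algorithm. Lemma~\ref{lem:Dioph-lemma} only gives $B \geq n e^{-C(\log n)^{-\kappa}} > n/2$ for $n$ \emph{large enough}; near the bottom of $\mathcal{B}$ the gaps can have ratio exceeding $2$. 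For instance, with bases $5$ and $7$ the consecutive power-products $7$ and $25$ have ratio $25/7 > 2$, and the greedy algorithm applied to $n = 24$ picks $7$ three times and then $1$ three times, producing $3\cdot 7 + 3\cdot 1$ with digits equal to $3$ (already $1$ and $3$ are consecutive with ratio $3$ for bases $3$ and $5$). This is exactly why the paper inserts the remark after Corollary~\ref{cor:greedy}: the algorithm must be preprocessed with $\set{0,1}$-representations of all integers up to the threshold $N_0$ from the proof of part~\ref{itm:upper}; above $N_0$ your argument does work, since there $n - B < n/2 < B$ and the picked power-products strictly decrease.

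A second, smaller gap: you deduce the bound on the number of steps from the \emph{statement} of \ref{itm:upper}, but that statement only asserts the existence of some low-weight representation, not that the greedy output is one. What is actually needed is inequality \eqref{eq:choice-N0:kappa2} from the proof: each greedy step with residual $m > N_0$ replaces $m$ by $m - B \leq Cm/(\log m)^{\kappa}$ and hence decreases $\log m/\log\log m$ by at least $\kappa/2$, so after $\Oh{\log n/\log\log n}$ steps the residual drops below $N_0$, where the (preprocessed) algorithm finishes in $\Oh{1}$ further steps. Your plan is therefore repaired by invoking the proof of \ref{itm:upper} rather than its statement, together with the preprocessing caveat for the digit claim.
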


Note that this corollary is valid if the greedy algorithm is suitably
preprocessed. To make this more precise, the algorithm needs
representations with only digits~$0$ and $1$ for all numbers
from $0$ to some $N_0$. This $N_0$ is to be found in the proof of
Theorem~\ref{thm:main}, part~\ref{itm:upper}; it might
actually be huge (if it can even be calculated with reasonable
effort). On the other hand, relaxing the condition on the digits
being only~$0$ and $1$ for the numbers up to~$N_0$ also suffices for
the validity of Corollary~\ref{cor:greedy}.

Yu, Wang, Li and Tian~\cite{Yu-Wang-Li-Tian:2013:length-triple-base}
use the proof of the $\Oh[\big]{\frac{\log n}{\log \log n}}$ bound
of~\cite{Dimitrov-Jullien-Miller:1998:algor-for-modul-expon} for
double-base representations with bases~$2$ and $3$ to show the same
bound for triple-base representations with bases~$2$, $3$ and $5$.

It is already mentioned
in~\cite{Dimitrov-Jullien-Miller:1998:algor-for-modul-expon} that
their upper bound of the Hamming weight of the representations
obtained by the greedy algorithm is best possible. Such a lower bound
is also derived
in~\cite{Chalermsook-Imai-Suppakitpaisarn:2015:lower-bound-double-base}.

\subsection{Lower bounds}
\label{sec:lower-existing}

Clearly, the minimal Hamming weight of integers~$n \in \mathcal{B}$
is~$1$. So a goal related to lower bounds is to find sequences of
integers with large minimal Hamming weight.

As mentioned, Dimitrov and
Howe~\cite{Dimitrov-Howe:2011:length-double-base-repr} and Yu, Wang,
Li and Tian~\cite{Yu-Wang-Li-Tian:2013:length-triple-base} state the
existence of a constant~$K_2$ and the existence of infinitely many
integers~$n$ whose minimal Hamming weight is greater than $K_2
\frac{\log n}{\log \log n \,\cdot\, \log \log \log n}$ for
representations with bases~$2$ and $3$, and bases~$2$, $3$ and $5$,
respectively.

\subsection{Distribution of the Hamming weight}
\label{sec:distribution}

Beside the minimal Hamming weight of an integer~$n$, the expected
Hamming weight of a random multi-base representation of~$n$ and
more generally the distribution of the
Hamming weight of all representations of~$n$ have been studied. In
\cite{Krenn-Ralaivaosaona-Wagner:2014:multi-base-asy,
  Krenn-Ralaivaosaona-Wagner:ta:multi-base-asy-full}, an asymptotic
formula of the form $K (\log n)^r + \Oh[\big]{(\log n)^{r-1} \log\log n}$
for the expected Hamming weight of a random
representation of an integer~$n$ is derived with explicit constant~$K$;
see~\cite[Theorem~IV]{Krenn-Ralaivaosaona-Wagner:ta:multi-base-asy-full}.
The order of magnitude $(\log n)^r$ of this result depends,
in contrast to the minimal Hamming weight, on the number~$r$ of bases.
Moreover, it
is shown in \cite{Krenn-Ralaivaosaona-Wagner:2014:multi-base-asy,
  Krenn-Ralaivaosaona-Wagner:ta:multi-base-asy-full}
that the Hamming weight asymptotically follows a Gaussian
distribution, and an asymptotic expression for the variance is provided as well.

\subsection{Single-base representations}
\label{sec:single-base}

For completeness, we also provide some background on (redundant) single-base
representations, i.e., representations with $r = 1$ and an integer
base~$b_1=b$, but a digit set that might differ from the standard choice
$\{0,1,\ldots,b-1\}$.

Papers \cite{Heuberger-Muir:2009:closest-one} and
\cite{Phillips-Burgess:2004:minim-weigh} provide a way to compute
minimal representations. The minimal Hamming weight of different kinds of single-base
representations is studied in \cite{Cohen:2005:analy-flexible-window,
  Morain-Olivos:1990, muirstinson:minimality,
  Solinas:2000:effic-koblit, Thuswaldner:1999}. One particular
representation, which often is minimal, is the so-called non-adjacent
form (cf.\@ \cite{Reitwiesner:1960,
  Heuberger-Krenn:2013:wnafs-optimality}); it uses a signed digit set,
i.e., a digit set containing also negative integers. Grabner and
Heuberger~\cite{Grabner-Heuberger:2006:Number-Optimal} count
representations with minimal Hamming weight for such a signed digit set.

\section{The upper bound}
\label{sec:upper}

The proof of the first statement of Theorem~\ref{thm:main} follows from
an analysis of the natural greedy algorithm and is based on some results
from Diophantine approximation.

The following lemma is the statement corresponding to the result of
Tijdeman~\cite{Tijedeman:1974:distance-small-primes} on which the
analysis of Dimitrov, Jullien and
Miller in~\cite{Dimitrov-Jullien-Miller:1998:algor-for-modul-expon} is
based on.

\begin{lem}\label{lem:Dioph-lemma}
  There are positive constants $C$ and $\kappa$ with the following
  property: for every integer $n>1$, there
  is an element $B \in \mathcal{B}$ such that
  \begin{equation*}
    n e^{-C (\log n)^{-\kappa}} \leq B \leq n.
  \end{equation*}
\end{lem}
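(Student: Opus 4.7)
The plan is to reduce to two multiplicatively independent bases, say $b_1$ and $b_2$, and to construct $B = b_1^{\alpha_1} b_2^{\alpha_2}$ (with the remaining exponents equal to zero, so that $B\in\mathcal{B}$). Set $L \colonequals \log n$, $L_j \colonequals \log b_j$, and $\theta \colonequals L_1/L_2$. Finding $B$ as in the claim is equivalent to producing nonnegative integers $\alpha_1, \alpha_2$ with $\alpha_1 L_1 + \alpha_2 L_2 \in [L - \epsilon_n,\,L]$, where $\epsilon_n = \Oh{(\log n)^{-\kappa}}$: then $\log(n/B) = L - \alpha_1 L_1 - \alpha_2 L_2 \in [0,\epsilon_n]$ gives the two-sided bound on $B$.

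The key input is Baker's theorem on linear forms in logarithms. Since $b_1$ and $b_2$ are multiplicatively independent integers greater than $1$, there exist effective positive constants $c_0$ and $\mu$, depending only on $b_1,b_2$, such that for every $(a,b)\in\mathbb{Z}^2\setminus\set{(0,0)}$,
\[
  \abs{aL_1 - bL_2} \;\ge\; c_0 \max(\abs{a},\abs{b})^{-\mu}.
\]
A routine manipulation (noting that $\abs{p}=\Oh{q}$ whenever $\abs{q\theta-p}\le 1$) yields $\abs{q\theta - p} \ge c_1 q^{-\mu}$ for all integers $p, q$ with $q\ge 1$; in particular $\theta$ has finite irrationality measure.

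From this Diophantine bound, quantitative equidistribution of the Kronecker sequence $(\fractional{m\theta})_{m\ge 0}$ follows: by the Erd\H{o}s--Tur\'an--Koksma inequality, its discrepancy $D_N$ satisfies $D_N = \Oh{N^{-\rho}}$ for some $\rho > 0$ (expressible in terms of $\mu$). Hence, for any $x\in\mathbb{R}$ and any sufficiently large $N$, the number of integers $\alpha_1 \in [0,N]$ with $\fractional{x - \alpha_1\theta} \in [0, C_2 N^{-\rho}]$ is at least $N\bigl(C_2 N^{-\rho} - 2 D_N\bigr) > 0$ once $C_2$ is chosen larger than twice the implicit constant in $D_N$, so such an $\alpha_1$ exists.

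To finish, apply this with $x \colonequals L/L_2$ and $N \colonequals \floor{L/L_1} = \Theta(\log n)$, pick the corresponding $\alpha_1$, and define $\alpha_2 \colonequals \floor{(L - \alpha_1 L_1)/L_2}$. One has $\alpha_2 \ge 0$ (since $\alpha_1 L_1 \le L$) and
\[
  L - \alpha_1 L_1 - \alpha_2 L_2 \;=\; L_2 \fractional{x - \alpha_1\theta} \;=\; \Oh{(\log n)^{-\rho}},
\]
so $B = b_1^{\alpha_1} b_2^{\alpha_2}\in\mathcal{B}$ satisfies the claim with $\kappa = \rho$. The main technical step is the passage from the Diophantine inequality to the discrepancy bound $D_N = \Oh{N^{-\rho}}$: this requires some care in applying Erd\H{o}s--Tur\'an--Koksma (or, equivalently, a three-distance-theorem argument via the continued fraction of $\theta$), and it is the only place where the exact admissible exponent $\kappa$ is determined.
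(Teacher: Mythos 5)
Your proposal is correct and follows essentially the same route as the paper: reduce to two multiplicatively independent bases, use Baker's theorem to get a finite irrationality measure for the ratio of logarithms, convert that into a power-saving discrepancy bound for the Kronecker sequence (the paper cites the same Erd\H{o}s--Tur\'an-type result from Kuipers--Niederreiter), and use it to place $\alpha_1\log b_1+\alpha_2\log b_2$ in a window of length $\Oh{(\log n)^{-\kappa}}$ just below $\log n$. The only differences are cosmetic---you count points in a short interval where the paper bounds the largest gap, and you swap the roles of the two bases---plus the minor remark that your argument needs $N$ large, so the finitely many small $n$ must be absorbed by enlarging the constant $C$.
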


\begin{proof}

It clearly suffices to prove the statement in the case where $r=2$;
let us use the abbreviations $p = b_1$, $q = b_2$, and set $\lambda =
\log_p q$. Since $p$ and $q$ are multiplicatively independent, $\lambda$
is irrational, which will be crucial for us. 

Let $\fractional{x} = x - \floor{x}$ denote the fractional part of a
real number $x$. As a first step, we consider the sequence
$\Lambda_M = (\fractional{\lambda m})_{m = 0}^{M-1}$
and show that its ``gaps'' (intervals that do not contain a value of
$\Lambda_M$) can be bounded in terms of $M$. The structure of these
gaps is in fact very well understood (see
\cite{Alessandri-Berthe:1998}), but we only require an upper bound.

Recall that the \emph{discrepancy} of $\Lambda_M$ is given by 
\begin{equation*}
  D(\Lambda_M) = \sup_J \abs*{\frac{1}{M} \card{J \cap \Lambda_M} - \mu(J)},
\end{equation*}
where $\mu$ denotes the Lebesgue measure and the supremum is taken
over all intervals $J \subseteq [0,1]$. The discrepancy is obviously
an upper bound on the length of the largest gap in $\Lambda_M$ (i.e.,
the Lebesgue measure of the largest interval
$J$ such that $\card{J \cap \Lambda_M} = 0$). Sequences of the form
$(\fractional{\lambda m})_{m \geq 0}$ and their discrepancy have been
investigated quite thoroughly: let $\gamma$ be the \emph{irrationality
  measure} of $\lambda$, which is defined as the infimum of all
exponents $\nu$ for which there are at most finitely many integer
solutions $(a,b)$ to the inequality
\begin{equation*}
  \abs*{\lambda - \frac{a}{b}} < \frac{1}{b^{\nu}}.
\end{equation*}
Then one has $D(\Lambda_M) = \Oh[\big]{M^{-1/(\gamma-1) + \epsilon}}$ for
every $\epsilon > 0$;
see~\cite[Chapter 2.3, Theorem 3.2]{Kuipers-Niederreiter:1974}.
The fact that the irrationality measure $\gamma$ is finite in
our case, where $\lambda = \log_p q$, is a simple consequence of
Baker's theory of linear forms in logarithms; see~\cite{Baker:1990}
for a general reference. Bugeaud~\cite{Bugeaud:2015:effective} even
provides explicit bounds for this specific case.

Fix a positive constant $\kappa < 1/(\gamma-1)$ and a positive
constant $C_1$ such that 
$$D(\Lambda_M) \leq C_1M^{-\kappa}$$
for all $M \geq 1$. We set
$M = \lceil \log_q n \rceil$ and consider the interval from
$\fractional{\log_p n} - C_1 M^{-\kappa}$ to $\fractional{\log_p n}$.
Since the discrepancy is an upper bound on all gaps in $\Lambda_M$, we know that there must be an $m \in \set{0,1,\ldots,M-1}$ such that
\begin{equation*}
  \fractional{\log_p n} - C_1 M^{-\kappa} \leq \fractional{\lambda m}
  \leq \fractional{\log_p n}.
\end{equation*}
Note that if $\fractional{\log_p n} \leq C_1 M^{-\kappa}$, we may simply choose $m=0$.

Since $\lambda m \leq \lambda (M-1) \leq \log_p q \log_q n = \log_p n$,
it follows that there is a nonnegative integer $\ell$ such that
\begin{equation*}
  \log_p n - C_1 M^{-\kappa} \leq \ell + \lambda m \leq \log_p n,
\end{equation*}
which is equivalent to
\begin{equation*}
  \log n - (C_1 \log p) M^{-\kappa} \leq \ell \log p + m \log q \leq \log n.
\end{equation*}
This in turn implies that there exist nonnegative $\ell$ and $m$
such that
\begin{equation*}
  n e^{-C (\log n)^{-\kappa}} \leq p^{\ell} q^m \leq n,
\end{equation*}
where $C = (C_1 \log p) (\log q)^{\kappa}$. This proves the lemma.
\end{proof}

Now we are ready to prove statement~\ref{itm:upper} of Theorem~\ref{thm:main}.

\begin{proof}[Proof of Theorem~\ref{thm:main}, part~\ref{itm:upper}]
Take $C$
and $\kappa$ as in the lemma, and note that
\begin{equation*}
  \frac{\log(Cn/(\log n)^{\kappa})}{\log \log(Cn/(\log n)^{\kappa})}
  = \frac{\log n}{\log \log n} - \kappa + O \Big( \frac{1}{\log \log n} \Big).
\end{equation*}
Let $N_0$ be large enough so that $C/(\log n)^{\kappa} < \frac12$ as well as
\begin{equation}\label{eq:choice-N0:kappa2}
  \frac{\log(Cn/(\log n)^{\kappa})}{\log \log(Cn/(\log n)^{\kappa})}
  \leq \frac{\log n}{\log \log n} - \frac{\kappa}{2}
\end{equation}
for all $n > N_0$. Moreover, choose a constant $K_1 \geq
\frac{2}{\kappa}$ sufficiently large so that every positive integer $n
\in \set{3,4,\ldots,N_0}$ has a representation of the
form~\eqref{eq:multi-base} of Hamming weight at most
$\min \set[\big]{\frac{K_1 \log n}{\log \log n},
  \frac{K_1 \log N_0}{\log \log N_0}}$.

Now it follows by induction that in fact every integer $n > 2$ has a
representation whose Hamming weight is at most
$\frac{K_1 \log n}{\log \log n}$. For $n \leq N_0$,
this holds by our choice of $N_0$ and
$K_1$. For $n > N_0$, Lemma~\ref{lem:Dioph-lemma} guarantees the
existence of an element $B \in \mathcal{B}$ for which
\begin{equation}\label{eq:existence-B:bound}
  0 \leq n - B \leq n - n e^{-C (\log n)^{-\kappa}}
  \leq \frac{Cn}{(\log n)^{\kappa}}.
\end{equation}
The latter inequality follows by taking advantage of
the classic bound~$1-e^{-x} \leq x$.
The number $n-B$ therefore has a representation whose Hamming weight
is at most
\begin{equation*}
  K_1 \cdot \frac{\log(Cn/(\log n)^{\kappa})}{\log \log(Cn/(\log n)^{\kappa})}
  \leq K_1 \frac{\log n}{\log \log n} - \frac{K_1\kappa}{2}
  \leq K_1 \frac{\log n}{\log \log n} - 1
\end{equation*}
because of \eqref{eq:choice-N0:kappa2}.
The bound~\eqref{eq:existence-B:bound} and our assumption
$C/(\log n)^{\kappa} < \frac12$ imply
$n-B \leq Cn/(\log n)^{\kappa} < \frac{n}{2}$, so we must have $n -
B < B$, thus the element $B$ does not occur in the representation of
$n-B$ (i.e., its coefficient $d_B$ is zero). So we can add $B$ to the
representation of $n-B$ to obtain a multi-base representation of the
form~\eqref{eq:multi-base} whose Hamming weight is at most
$\frac{K_1 \log n}{\log \log n}$. This completes the induction and thus the
proof of the desired upper bound.
\end{proof}

\section{The lower bound}
\label{sec:lower}

The second statement~\ref{itm:lower} of Theorem~\ref{thm:main} is proven by means of a
simple counting argument.
We will use the assumptions made in
Section~\ref{sec:setup}. Multiplicative independence is not actually required, though.

Note first that in any representation of the
form
\begin{equation*}
  n = \sum_{B \in \mathcal{B}} d_B B,
\end{equation*}
with nonnegative $d_B \in D$, a digit
$d_B$ can only be nonzero if $B \leq n$. The number $B$, on the other hand, can be represented as
\begin{equation*}
  B = b_1^{\alpha_1} b_2^{\alpha_2} \cdots b_r^{\alpha_r}
\end{equation*}
for some nonnegative integers $\alpha_1$, $\alpha_2$, \dots, $\alpha_r$ by
definition. We must have
\begin{equation*}
  0 \leq \alpha_j \leq \log_{b_j} B,
\end{equation*}
giving us $1 + \floor{\log_{b_j} B}$ possible values for $\alpha_j$.
This justifies our assumption (Section~\ref{sec:setup})
that the number of possible values of
$\alpha_j$ is bounded by $c_j \log n$ for some constant~$c_j$.

\begin{proof}[Proof of Theorem~\ref{thm:main}, part~\ref{itm:lower}]
For the moment, let $N$ be an arbitrary positive integer; later, we will choose
$N=2^s$ and let $s\to\infty$. Let $\mathcal{B}_N \subseteq \mathcal{B}$
be the set of power-products appearing in some multi-base representation of some
integer in the set $\set{1,2,\dots,N}$. We head for a bound for
$\abs{\mathcal{B}_N}$. As mentioned, we have $d_B = 0$ for all
$B > N$, so all such integers $B$ do not contribute to multi-base representations
of numbers in $\{1,2,\ldots,N\}$ and are therefore not contained in $\mathcal{B}_N$.

By the considerations above, we have
\begin{equation*}
  \abs{\mathcal{B}_N} \leq
  T(N) \coloneqq \prod_{j=1}^r (c_j \log N)
  = (\log N)^r \prod_{j=1}^r c_j
\end{equation*}
as $N\to\infty$. The number $R_K(N)$ of
representations using only the power-products in $\mathcal{B}_N$ and having
Hamming weight at most $K$ is bounded above by
\begin{equation*}
  R_K(N) \leq \sum_{k=1}^K \binom{T(N)}{k} (\card{D}-1)^k,
\end{equation*}
since we have at most $\binom{T(N)}{k}$ choices for those
$B\in\mathcal{B}_N$ with nonzero digits $d_B$, and at most
$(\card{D}-1)^k$ choices for the digits. A crude estimate gives us, at least
for $K \leq T(N)/2$,
\begin{align*}
  R_K(N) &\leq \binom{T(N)}{K} \sum_{k=1}^K  (\card{D}-1)^k \\
&\leq \binom{T(N)}{K} \card{D}^K \\
&\leq \bigl(\card{D}\, T(N)\bigr)^K.
\end{align*}
We claim that for every positive constant~$K_2 < \frac{1}{r}$, the following
holds: for all sufficiently large positive integers~$s$, there is
an integer $n \in \set{2^{s-1}+1,2^{s-1}+2,\ldots,2^{s}}$ without a representation
whose Hamming weight is less than $K_2 \frac{\log n}{\log \log n}$.
This implies that there are infinitely many values of positive integers $n$
for which there is
no representation whose Hamming weight is less than or equal to
$K_2 \frac{\log n}{\log \log n}$, completing the proof.

To prove the claim, suppose all integers in the set
$\set{2^{s-1}+1,2^{s-1}+2,\ldots,2^{s}}$ have a representation whose
Hamming weight is at most $K$. Then we must have
\begin{equation*}
  \bigl( \card{D}\, T(2^{s}) \bigr)^K \geq R_K(2^s) \geq 2^{s-1}.
\end{equation*}
Taking logarithms yields
\begin{equation*}
  K \geq \frac{(s-1) \log 2}{\log T(2^{s}) + \log \card{D}}
  = \frac{(s-1) \log 2}{r \log s + \Oh{1}}
  > K_2 \frac{\log(2^s)}{\log \log(2^s)}
\end{equation*}
for sufficiently large $s$. The claim follows.
\end{proof}

\section{From the point of view of communication complexity}
\label{sec:communication-complexity}

In this section, we will provide an alternative proof based on
communication complexity, to show that the upper bound obtained in
Section~\ref{sec:upper} is asymptotically tight; i.e., we
prove~\ref{itm:lower} of Theorem~\ref{thm:main}.

As mentioned, we use communication complexity to prove the
statement. Consider the situation where Alice and Bob both hold $\ell$
bits of information (or equivalently a nonnegative integer less than
$2^{\ell}$), denoted by the messages~$m_{\mathrm{Alice}}$ and~$m_{\mathrm{Bob}}$.
Bob wants to check if they hold the same information. To do that, Alice
can send some message (according to some protocol) to Bob.
Every time Bob got a bit of information
from Alice, he can announce ``equal'' if he is sure that
$m_{\mathrm{Alice}} = m_{\mathrm{Bob}}$, ``not equal'' when he is sure
that $m_{\mathrm{Alice}} \neq m_{\mathrm{Bob}}$, or ``more information''
to request more information on~$m_{\mathrm{Alice}}$ from Alice. Alice
wants to minimize the number of bits that she sends to Bob; see
Yao~\cite{Yao:1979:complexity-questions}.

It is known that, when Alice uses any deterministic algorithm/protocol, there
always exist messages~$m_{\mathrm{Alice}}$ and $m_{\mathrm{Bob}}$ such that
the number of communication bits is at least $\ell$; see
Kushilevitz~\cite{Kushilevitz:1997:communication-complexity}.

For the proof below, we will use the assumptions made in
Section~\ref{sec:setup}, but again we do not require multiplicative independence.

\begin{proof}[Proof of Theorem~\ref{thm:main}, part~\ref{itm:lower}]
  We assume for contradiction that for each~$n$, there exists a
  multi-base representation with only $\oh[\big]{\frac{\log n}{\log \log n}}$
  summands. Set $\ell = \floor{\log n}$.
  Let $m_{\mathrm{Alice}}$ and $m_{\mathrm{Bob}}$ be $\ell$-bit
  messages so that~$\ell$ bits need to be communicated in order
  to determine equality.

  Now, suppose that Alice converts the $\ell$-bit message
  $m_{\mathrm{Alice}}$ to a multi-base representation with
  $\oh[\big]{\frac{\log n}{\log \log n}}$ summands.
  Since all exponents are in $\Oh{\log n}$ and the number~$r$ of bases is fixed,
  each summand of a multi-base representation can be denoted by $\Oh{\log \log n}$ bits.
  Therefore, Alice can tell Bob the whole message~$m_A$ by only
  \begin{equation*}
    \Oh{\log \log n} \cdot \oh[\Big]{\frac{\log n}{\log \log n}}
    = \oh{\log n}
  \end{equation*}
  bits; a contradiction.
\end{proof}

\renewcommand{\MR}[1]{}
\bibliography{bib/cheub}
\bibliographystyle{amsplain}

\end{document}